\documentclass[amsfonts,amsmath,12pt]{article}
\usepackage{amsmath}
\usepackage{amsfonts}
\usepackage{microtype}
\newtheorem{theorem}{Theorem}[section]
\newtheorem{lem}[theorem]{Lemma}
\newtheorem{prop}[theorem]{Proposition}
\newtheorem{cor}[theorem]{Corollary}
\newtheorem{defn}[theorem]{Definition}

\newenvironment{defn-new}{\begin{defn} \em}{\end{defn}}
\newtheorem{rem}[theorem]{Remark}
\newenvironment{rem-new}{\begin{rem} \em}{\end{rem}}
\newtheorem{ex}[theorem]{Example}
\newenvironment{ex-new}{\begin{ex} \em}{\end{ex}}

\newenvironment{notation-new}{\begin{rem} \em}{\end{rem}}

\newenvironment{agr-new}{\begin{rem} \em}{\end{rem}}

\makeatletter \@addtoreset{equation}{section} \makeatother

\makeatletter \@addtoreset{figure}{section} \makeatother

\begin{document}
\begin{center}
{\Large \textbf {Clairaut Generic Riemannian Maps from Nearly K\"ahler
Manifolds}}\\[0pt]

 {\bf  Nidhi Yadav$^{1}$,  Kirti Gupta}\footnote{
Department of Mathematics \& Statistics, Dr. Harisingh Gour Vishwavidyalaya,
Sagar-470 003, M.P. INDIA \newline
Email: nidhiyadav.bina@gmail.com,  guptakirti905@gmail.com}, {\bf Punam Gupta}\footnote{School of Mathematics, Devi Ahilya Vishwavidyalaya, Indore-452 001, M.P. 
INDIA\newline
Email: punam2101@gmail.com} 
\end{center}

\bigskip

\noindent {\bf Abstract.}
In this paper, we study Clairaut generic Riemannian maps from a nearly K\"ahler manifold to a Riemannian manifold. Furthermore, we obtain a condition for a Clairaut  generic Riemannian map to be a totally geodesic foliation on the total manifold. Lastly, we provide non-trivial examples of such Riemannian maps.

\noindent {\bf 2010 Mathematics Subject Classification.}\vspace{0.1cm} 53C12,
53C15, 53C20, 53C55

\noindent {\bf Keywords.} Riemannian manifold, nearly K\"ahler manifold,
Generic Riemannian map, Clairaut Riemannian map, Totally geodesic map.

\section{Introduction}
The idea of a Riemannian map between Riemannian manifolds plays a key role in differential geometry, and in 1992, Riemannian map between two Riemannian manifolds was first introduced by Fischer \cite{Fish} as a generalization of the notion of an isometric immersion and Riemannian submersion. The notions of an immersion and a submersion play a key
 role in the theory of smooth maps between smooth manifolds(finite or
 infinite). If we consider Riemannian manifolds, then in the theory
 of smooth maps between Riemannian manifolds, the two notions of an
 immersion and a submersion correspond to the notions of an isometric immersion and a Riemannian submersion, respectively, and are widely used in differential geometry \cite{Neill},\cite{Watson}.  For the notion of Riemannian maps, we also followed B. Sahin \cite{SahinAR},\cite{SahinSRK}. 

 Recently, B. Sahin \cite{SahinAR} introduced the notion of anti-invariant Riemannian maps, which are Riemannian maps from almost Hermitian
 manifolds to Riemannian manifolds such that the vertical distributions (or, for that matter, the fibers) are anti-invariant under the almost
 complex structure of the total space. Further, as a generalization of
 anti-invariant Riemannian maps, he introduced the notion of conformal
 semi-invariant Riemannian maps when the base manifold is a Riemannian manifold and a K\"ahler manifold \cite{SahinCR},\cite{SahinCSR}. He has shown that such
 maps are very useful for studying the geometry of the total space of
 the Riemannian maps. In the present article, we study the Riemannian
 maps from almost Hermitian manifolds under the assumption that the
 integral manifolds of vertical distributions $\operatorname{ker}F_*$ are generic submanifolds of the total space, and we call this the Riemannian maps with generic fibers. It is not hard to see that one
 can regard it as a generalization of semi-invariant Riemannian maps. Recently, there have been many research papers on the
 geometry of Riemannian maps between various Riemannian manifolds \cite{JaiswalHM,JaiswalNHM,Pandey,ParkSS,ParkSSR,kiran}.  For more study about Clairaut maps, refer \cite{punam,punama,punams}.
 
The paper first reviews the necessary preliminaries on nearly K\"ahler manifolds, O'Neill tensors, and the structure of generic Riemannian maps. It then introduces the notion of a Clairaut generic Riemannian map, characterized by the existence of a girth function analogous to the classical Clairaut relation for geodesics on surfaces of revolution.

This work contains some results that provide necessary and sufficient conditions for a generic Riemannian map to satisfy the Clairaut condition. These conditions are expressed in terms of vertical and horizontal projections, the nearly K\"ahler covariant derivatives, and the O'Neill tensor fields. Several results give criteria for when the distributions $D_1$ and $D_2$ (coming from the decomposition of $\operatorname{ker} F_*$ ) define totally geodesic foliations, generalizing earlier results for invariant, anti-invariant, and slant submersions.

Finally, the paper presents explicit examples of Clairaut generic Riemannian maps from manifolds having nearly-K\"ahler metric.This paper extends Clairaut-type geometry to the setting of generic Riemannian maps on nearly K\"ahler manifolds, enriching the understanding of how complex structures, curvature, and geodesic behavior interact under such maps.

\section{Preliminaries}

An almost complex structure on a smooth manifold $M$ is a smooth tensor
field $J $ of type $(1,1)$ such that $J ^{2}=-I$. A smooth
manifold equipped with such an almost complex structure is called an almost
complex manifold. An almost complex manifold $\left( M,J \right) $
endowed with a chosen Riemannian metric $g$ satisfying
\begin{equation}
g(J X,J Y)=g(X,Y)  \label{eq-ka1}
\end{equation}%
for all $X,Y\in TM$, is called an almost Hermitian manifold.

An almost Hermitian manifold $M$ is called a nearly K\"ahler manifold \cite{Gray} if
\begin{equation}
\left( \nabla _{X}J \right) Y+\left( \nabla _{Y}J \right) X=0
\label{eq-ka2}
\end{equation}%
for all $X,Y\in TM$. If $\left( \nabla _{X}J \right) Y=0$ for all $%
X,Y\in TM$, then $M$ is known as K\"ahler manifold. Every K\"ahler manifold is
nearly K\"ahler but the converse need not be true.

\subsection{Riemannian Maps}
Consider $F: (M,g_{1}) \rightarrow (N,g_{2})$ to be a smooth map between Riemannian manifolds $M$ and $N$ of dimension $m$ and $n,$ respectively, such that $0 < rank F< min \{m,n\},$ so that both 
$\operatorname{ker} F_{*p}$  and $ (\operatorname{range} F_{*p})^\perp$ 
are non-trivial. If $F_{*}:T_{p}M \rightarrow T_{F(p)}N$ denotes the differential map at $p \in M,$ and $F(p) \in N,$ then $T_{p}M$ splits orthogonally with respect to $g_{1}(p)$  as \rm{\cite{Fish}}
\[ T_{p}M= \operatorname{ker}F_{*p} \oplus(\operatorname{ker} F_{*p})^{\perp} = {\mathcal{V}}_{p} \oplus{\mathcal{H}}_{p},\]
where ${\mathcal{V}}_{p}=\operatorname{ker} F_{*p}$ and ${\mathcal{H}}_{p}=(\operatorname {ker} F_{*p})^{\perp}$ are the vertical and horizontal parts of $T_{p}M$, respectively.

$ T_{F(p)}N$ split orthogonally with respect to $g_{2}(F(p))$, therefore $T_{F(p)}N$ can be decomposed as follows: 
$$ T_{F(p)}N= {range F_{*}}_{p} \oplus (range F_{*p})^\perp. $$
Then the map $F: (M,g_{1}) \rightarrow (N,g_{2})$ is called a Riemannian map at $ p \in M,$ if 
\begin{equation}
g_{2}(F_{*}X,F_{*}Y)= g_{1}(X,Y)  \label{f4}
\end{equation}
for all vector fields $X,Y \in \Gamma({\operatorname{ker} F_{*p}})^{\perp}.$
The map $F$ is called a Riemannian map if it satisfies the above condition at each point $p\in M$.

In particular, if $\operatorname{ker} F_*=0$, then a Riemannian map is just an isometric immersion, while if $(range F_*)^\perp=0$,
 then a Riemannian map is nothing but a Riemannian submersion.

The second fundamental tensors of all fibers $F ^{-1}(q),\ q\in N$ give
rise to the tensor field $T$ and $A$ in $M$ defined by O'Neill \cite{Neill} for
arbitrary vector fields $E$ and $F$, which is
\begin{equation}
T_{E}F={\cal H}\nabla _{{\cal V}E}^{M}{\cal V}F+{\cal V}\nabla _{{\cal V}%
E}^{M}{\cal H}F,  \label{EQ2.9}
\end{equation}%
\begin{equation}
A_{E}F={\cal H}\nabla _{{\cal H}E}^{M}{\cal V}F+{\cal V}\nabla _{{\cal H}%
E}^{M}{\cal H}F,  \label{EQ2.8}
\end{equation}%
where ${\cal V}$ and ${\cal H}$ are the vertical and horizontal projections.

On the other hand, from equations (\ref{EQ2.9}) and (\ref{EQ2.8}), we have
\begin{equation}
\nabla _{V}W=T_{V}W+\widehat{\nabla }_{V}W,  \label{EQ2.10}
\end{equation}
\begin{equation}
\nabla _{V}X={\cal H}\nabla _{V}X+T_{V}X,  \label{EQ2.11}
\end{equation}
\begin{equation}
\nabla _{X}V=A_{X}V+{\cal V}\nabla _{X}V,  \label{EQ2.12}
\end{equation}
\begin{equation}
\nabla _{X}Y={\cal H}\nabla _{X}Y+A_{X}Y,  \label{EQ2.13}
\end{equation}
for all $V,W\in \Gamma (\operatorname{ker} F _{\ast })$ and $X,Y\in \Gamma (\operatorname{ker} F
_{\ast })^{\perp },$ where ${\cal V}\nabla _{V}W=\widehat{\nabla }_{V}W.$ 
Recall that a horizontal vector field $X$ on $M$ is called basic if it is $F$-related to a vector field on $N.$ If $X$ is basic, then $A_{X}V={\cal H}\nabla _{V}X.$

\noindent It is easily seen that for $p\in M,$ $U\in {\cal V}_{p}$ and $X\in {\cal H}%
_{p}$ the linear operators
\[
{T}_{U},{A}_{X}:T_{p}M\rightarrow T_{p}M
\]%
are skew-symmetric, that is,
\begin{equation}
g({A}_{X}E,F)=-g(E,{A}_{X}F)\text{ and }g({T}_{U}E,F)=-g(E,{T}_{U}F)
\label{EQ2.14}
\end{equation}%
for all $E,F\in $ $T_{p}M.$ We also see that the restriction of ${T}$ to the
vertical distribution ${T}|_{\operatorname{ker} F _{\ast }\times \operatorname{ker} F _{\ast }}$ is
exactly the second fundamental form of the fibers of $F $. Since ${T}_{U}$
is skew-symmetric, $F $ has totally geodesic fibers if and only
if ${T}\equiv 0$.\newline
In addition, a Riemannian map is a Riemannian map with totally umbilical fibers if \cite{SahinSR}
\begin{equation}\label{eq-2t}
    T_{U}V=g_{1}(U,V)H
\end{equation}
for all $U,V \in \Gamma(\operatorname{ker}F_{*}),$ where $H$ is the mean curvature vector field of fibers.

Let $F:(M,g_{1})\rightarrow (N,g_{2})$ be a smooth map between Riemannian
manifolds. Then the differential $F _{\ast }$ of $F $ can be observed as
a section of the bundle $Hom(TM,F ^{-1}TN)\rightarrow M$, where $F
^{-1}TN$ is the bundle that has fibers $\left( F ^{-1}TN\right)
_{x}=T_{f(x)}N$, has a connection $\nabla $ induced from the Riemannian
connection $\nabla ^{M}$, and the pullback connection $\stackrel{F}{\nabla}.$  Then the second
fundamental form of $F $ is given by
\begin{equation}
(\nabla F _{\ast })(X,Y)=\stackrel{F}{\nabla} _{X}F _{\ast }Y-F _{\ast }(\stackrel{M}{\nabla}
_{X}Y)\text{ \ for all} \quad X,Y\in \Gamma (TM),  \label{EQ2.15}
\end{equation}%
where $\nabla ^{N}$ is the pullback connection \cite{Baird}. It is known that the second fundamental form is symmetric. In \cite{SahinAR}, \c Sahin proved that $(\nabla F_{*})(X,Y)$ has no component in $\operatorname{range}F_{*},$ for all $X,Y \in \Gamma(\operatorname{ker}F_{*})^{\perp}.$ More Precisely, we have 
\begin{equation}
    (\nabla{F}_{*})(X,Y)\in \Gamma (\operatorname{range}F_{*})^{\perp}.
    \label{f1}
\end{equation}
We also know
that $F $ is said to be a totally geodesic map \cite{Baird} if $(\nabla F
_{\ast })(X,Y)=0$ for all $X,Y\in TM$.\\

Let $F$ be a Riemannian map from an almost Hermitian manifold $\left(M, g_{1}, J\right)$ to a Riemannian manifold $(N, g_{2}).$ Define

$$
{\mathcal{D}}_p=\left(\operatorname{ker} F_{* p} \cap J\left(\operatorname{ker} F_{* p}\right)\right), \quad p \in M
$$
the complex subspace of the vertical subspace ${\mathcal{V}}_p.$
\begin{defn-new}\cite{Sahingen}
 Let $F$ be a Riemannian map from an almost Hermitian manifold $(M, g_{1}, J)$ to a Riemannian manifold $(N, g_{2}).$ If the dimension $\mathcal{D}_p$ is constant along $M$ and it defines a differentiable distribution on $M$ then we say that $F$ is a generic Riemannian map.   
\end{defn-new} 

A generic Riemannian map is purely real if ${\mathcal{D}}_p=\{0\}$ and complex if ${\mathcal{D}}_p= \operatorname{ker}F_{*p}$. 

\noindent Let $\mathcal{D}_1$ denote the differentiable distribution determined by $\mathcal{D}_p.$ For a generic Riemannian map, the orthogonal complementary distribution $\mathcal{D}_2$ of $\mathcal{D}_1$, called the purely real distribution, satisfies

\begin{equation}\label{eq-1d}
\operatorname{ker} F_*={\mathcal{D}}_{1} \oplus {\mathcal{D}}_{2},     
\end{equation}
and
$$
{\mathcal{D}}_{1} \cap {\mathcal{D}}_{2} =\{0\}.
$$
Let $F$ be a generic Riemannian map from an almost Hermitian manifold $(M, g_{1}, J)$  to a Riemannian manifold $(N, g_{2}).$ Then for $U \in \Gamma\left(\operatorname{ker} F_*\right)$, we write
\begin{equation} \label{eq-1v}
 J U=\phi U+\omega U,   
\end{equation}
where $\phi U \in \Gamma\left(\operatorname{ker} F_*\right)$ and $\omega U \in \Gamma\left(\left(\operatorname{ker} F_*\right)^{\perp}\right)$. 
Now we consider the complementary orthogonal distribution $\mu$ to $\omega {\mathcal{D}}_{2}$ in $\left(\operatorname{ker} F_*\right)^{\perp}$. It is obvious that we have
$$
\phi {\mathcal{D}}_{2} \subseteq {\mathcal{D}}_{2}, \quad \left(\operatorname{ker} F_*\right)^{\perp}=\omega {\mathcal{D}}_{2} \oplus \mu .
$$
Also for $V,W \in \Gamma (\operatorname{ker} F _{\ast }),$ we have 
\begin{equation} \label{eq-abc}
({\nabla}_{V}\phi)W=\widehat{{\nabla}}_{V}\phi W- \phi \widehat{{\nabla}}_{V}W,
\end{equation}
\begin{equation} \label{eq-ab}
    ({\nabla}_{V}\omega)W= {\mathcal{H}}{\nabla}_{V}\omega W- \omega \widehat{{\nabla}}_{V}W.
\end{equation}
For $X \in \Gamma\left(\left(\operatorname{ker} F_*\right)^{\perp}\right)$, we write
\begin{equation}\label{eq-2h}
J X=BX+CX,   
\end{equation}
where $ BX \in \Gamma\left({\mathcal{D}}_{2}\right)$ and $CX \in \Gamma(\mu)$. Then it is clear that we get
$$
B\left(\left(\operatorname{ker} F_*\right)^{\perp}\right)={\mathcal{D}}_{2}.
$$
Considering \rm{(\ref{eq-1d})}, for $U \in \Gamma\left(\operatorname{ker} F_*\right)$, we can write

\begin{equation}
J U=P_1 U+P_2 U+\omega U,    \label{h6}
\end{equation}
where $P_1$ and $P_2$ are the projections from $\operatorname{ker} F_*$ to ${\mathcal{D}}_{1}$ and ${\mathcal{D}}_{2}$, respectively.\\

Let $F $ be a generic Riemannian map from a nearly K\"ahler
manifold $(M,J ,g_{1})$ onto Riemannian manifolds $(N,g_{2})$. For any
arbitrary tangent vector fields $U$ and $V$ on $M$, we set
\begin{equation}
(\nabla _{U}J )V=P_{U}V+Q_{U}V, \label{eq-c1}
\end{equation}%
where $P_{U}V, Q_{U}V$ denotes the horizontal and vertical parts of $(\nabla
_{U}J )V$, respectively. Clearly, if $M$ is a K\"ahler manifold, then $%
P=Q=0$.\newline
If $M$ is a nearly K\"ahler manifold, then $P$ and $Q$ satisfy
\begin{equation}
P_{U}V=-P_{V}U,\qquad Q_{U}V=-Q_{V}U.  \label{eq-c2}
\end{equation}

\section{Clairaut Generic Riemannian Map}
Let $S$ be a revolution surface in ${\mathbb R}^{3}$ with rotation axis $L$.
For any $p\in S$, we denote by $r(p)$ the distance from $p$ to $L$. Given a
geodesic $\alpha:K\subset {\mathbb R}\rightarrow S$ on $S$, let $\theta (t)$
be the angle between $\alpha (t)$ and the meridian curve through $h
(t), t\in I$. A well-known Clairaut's theorem states that for any geodesic on $%
S $, the product $r\sin \theta $ is constant along $\alpha $, i.e., it is
independent of $t$. In the theory of Riemannian submersions, Bishop \cite%
{Bishop} introduces the notion of Clairaut Riemannian submersion, and the notion of Clairaut Riemannian map was defined by \c Sahin \cite{SahinCC} in the following way

\begin{defn-new}
\cite{SahinCC} 
A Riemannian map  $F :(M,g_{1})\rightarrow (N,g_{2})$ is
called a Clairaut Riemannian map if there exists a positive function $\tilde{r}$ on $M$,
which is known as the girth of the Riemannian map, such that, for every geodesic $%
\alpha$ on $M$, the function $(\tilde{r}\circ \alpha )\sin \theta $ is constant,
where$\ \theta (t)$ is the angle between $\dot{\alpha}(t)$ and the
horizontal space at $\alpha (t)$,  for any $t$.
\end{defn-new}

He also gave the following necessary and sufficient condition for a
Riemannian map to be a Clairaut Riemannian map: 

\begin{theorem}
\label{th-bis} \rm{\cite{Bishop}} Let $F :(M,g_{1})\rightarrow (N,g_{2})$ be a
Riemannian map with connected fibers. Then, $F $ is a Clairaut
Riemannian map with $\tilde{r}=e^{f}$ if and only if each fiber is totally umbilical and
has the mean curvature vector field $H=-{\operatorname{grad}}f$, where ${\operatorname{grad}}f$ is the
gradient of the function $f$ with respect to $g$.
\end{theorem}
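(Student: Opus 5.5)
The idea is to reduce the Clairaut condition to an ordinary differential identity along an arbitrary geodesic $\alpha$ of $M$. Write $\dot\alpha(t)=U(t)+X(t)$ with $U=\mathcal V\dot\alpha$ and $X=\mathcal H\dot\alpha$, and normalize so that $g_1(\dot\alpha,\dot\alpha)=a$ is constant, which holds for any geodesic. By definition of $\theta$, $g_1(U,U)=a\sin^2\theta$ and $g_1(X,X)=a\cos^2\theta$. The function $(e^{f}\circ\alpha)\sin\theta$ is constant exactly when $e^{2f}g_1(U,U)$ is constant. Differentiating, this is equivalent to
\[
\frac{d}{dt}g_1(U,U)+2\,g_1(U,U)\,\dot\alpha(f)=0 .
\]
Since $U$ and $X$ are orthogonal, $\frac{d}{dt}g_1(U,U)=-\frac{d}{dt}g_1(X,X)$, so it suffices to compute either derivative.

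Next, expand the geodesic equation $\nabla_{\dot\alpha}\dot\alpha=0$ using (\ref{EQ2.10})--(\ref{EQ2.13}) and take its vertical and horizontal parts. The horizontal part gives
\[
\mathcal H\nabla_{\dot\alpha}X=-T_UU-A_XU .
\]
Pairing with $X$ and using the skew-symmetry (\ref{EQ2.14}) of $A_X$, together with $A_XX=0$, we get $g_1(A_XU,X)=0$. Hence
\[
\tfrac12\frac{d}{dt}g_1(X,X)=-g_1(T_UU,X),
\]
and therefore
\[
\frac{d}{dt}g_1(U,U)=2g_1(T_UU,X).
\]
This purely local identity is the core of both directions.

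For sufficiency, assume the fibers are totally umbilical as in (\ref{eq-2t}) and $H=-\operatorname{grad}f$. Then $g_1(T_UU,X)=-g_1(U,U)X(f)$. In addition, $\operatorname{grad}f=-H$ is horizontal, so $U(f)=0$ and $\dot\alpha(f)=X(f)$. Substituting these into the identity gives exactly the displayed ODE, so $e^{f}\sin\theta$ is constant along every geodesic.

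For necessity, suppose $e^{2f}g_1(U,U)$ is constant along every geodesic. Then for every geodesic
\[
g_1(T_UU,X)=-g_1(U,U)\,\dot\alpha(f).
\]
The initial data $(U,X)$ of a geodesic at a point are arbitrary. Replacing $X$ by $-X$ and taking $X=0$ first yields $g_1(U,U)U(f)=0$, so $f$ is constant along the fibers; connectedness of the fibers is used here so that $\operatorname{grad}f$ is horizontal. It then follows that $g_1(T_UU,X)=-g_1(U,U)g_1(\operatorname{grad}f,X)$ for all $U,X$. Polarizing in $U$, using the symmetry of $T$ on vertical vectors, gives $T_UV=-g_1(U,V)\operatorname{grad}f$. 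Thus the fibers are totally umbilical with $H=-\operatorname{grad}f$.

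The main obstacle is the necessity direction. There one must justify that arbitrary initial vertical and horizontal components are realized by geodesics, and handle points where $\sin\theta=0$, where the Clairaut quantity degenerates. The plan is to work with the squared, differentiated form above, which avoids dividing by $\sin\theta$. Note also that the almost Hermitian or nearly K\"ahler structure plays no role in this statement: this is the general Bishop/\c Sahin characterization.
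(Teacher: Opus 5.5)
The paper gives no proof of this statement. It is quoted from Bishop (the Riemannian-map version is due to \c Sahin), so the relevant comparison is with the standard argument in those sources, and your proposal reproduces it correctly. That argument runs: the identity $\frac{d}{dt}g_1(U,U)=2g_1(T_UU,X)$ along a geodesic; the reformulation of the Clairaut condition as $g_1(T_UU,X)=-g_1(U,U)\,\frac{d}{dt}(f\circ\alpha)$; then umbilicity from arbitrary initial velocities plus polarization. It is also the template the paper itself follows in the proof of the next theorem.

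A few minor points.

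First, the step $A_XX=0$ deserves a line of justification for Riemannian maps. Since $\|F_*\|^2=\operatorname{rank}F$ is continuous and integer-valued, $F$ has locally constant rank. Hence $F$ is locally a Riemannian submersion onto its image, and $A$ is alternating on horizontal vectors.

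Second, taking $X=0$ alone already gives $U(f)=0$ pointwise. So neither the substitution $X\mapsto -X$ nor connectedness of the fibers is needed to make $\operatorname{grad}f$ horizontal.

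Third, you work with $e^{2f}g_1(U,U)$ rather than multiplying $\cos\theta\,\frac{d\theta}{dt}+\sin\theta\,\frac{df}{dt}=0$ by $a\sin\theta$, as the standard proof and the paper do. This is a genuine improvement: it avoids differentiating $\theta$, and it avoids the tacit assumption $\sin\theta\neq 0$ needed to reverse that multiplication.
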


\begin{defn-new}
 A generic Riemannian map from nearly K\"ahler manifold to Riemannian manifold is called Clairaut generic Riemannian map if it satisfies the condition of Clairaut Riemannian map.   
\end{defn-new}
Next, we prove some results and reveal some new structural behavior of Clairaut generic Riemannian maps, especially in nearly-K\"ahler settings.
\begin{lem}
Let $F$ be a generic Riemannian map from a nearly K\"ahler manifold $(M, g_1, J)$ to a Riemannian manifold  $(N, g_2).$  If $\alpha: I \rightarrow M$ is a regular curve and $X(t), U(t)$ denotes the horizontal and vertical components of its tangent vector field, then $\alpha$ is a geodesic on $M$ if and only if

\begin{align}
& {\mathcal{V}} \nabla_{\dot{\alpha}} B X+A_X C X+ {\mathcal{V}} \nabla_{\dot{\alpha}}\phi U+ A_X \omega U+T_U CX + T_U \omega U=0, \label{f8}\\
& {\mathcal{H}} \nabla_{\dot{\alpha}} C X+{\mathcal{H}} \nabla_{\dot{\alpha}} \omega U+A_X B X+T_U B X + A_X \phi U + T_U\phi U=0 .\label{f9}
\end{align}

\end{lem}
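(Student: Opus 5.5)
The plan is to reduce the geodesic condition $\nabla_{\dot{\alpha}}\dot{\alpha}=0$ to a condition on $J\dot{\alpha}$, and then split that condition into vertical and horizontal parts using the decompositions (\ref{eq-1v}), (\ref{eq-2h}) and the O'Neill formulas (\ref{EQ2.10})--(\ref{EQ2.13}).

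First, setting $X=Y=\dot{\alpha}$ in the nearly K\"ahler condition (\ref{eq-ka2}) gives $2(\nabla_{\dot{\alpha}}J)\dot{\alpha}=0$. Hence
\[
\nabla_{\dot{\alpha}}J\dot{\alpha}=(\nabla_{\dot{\alpha}}J)\dot{\alpha}+J\nabla_{\dot{\alpha}}\dot{\alpha}=J\nabla_{\dot{\alpha}}\dot{\alpha}.
\]
Since $J$ is invertible ($J^2=-I$), $\alpha$ is a geodesic if and only if $\nabla_{\dot{\alpha}}J\dot{\alpha}=0$. This is the only point where the nearly K\"ahler hypothesis enters. It is also the step that needs care: for a general almost Hermitian manifold the term $(\nabla_{\dot{\alpha}}J)\dot{\alpha}$ would survive and contribute the extra pieces $P_{\dot{\alpha}}\dot{\alpha}$ and $Q_{\dot{\alpha}}\dot{\alpha}$ from (\ref{eq-c1}). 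By (\ref{eq-c2}) both of these vanish here, consistent with the observation above.

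Next, write $\dot{\alpha}=X+U$ with $X$ horizontal and $U$ vertical, so that
\[
J\dot{\alpha}=BX+CX+\phi U+\omega U,
\]
where $BX,\phi U$ are vertical and $CX,\omega U$ are horizontal. Expand $\nabla_{\dot{\alpha}}$ of each term, splitting $\nabla_{\dot{\alpha}}=\nabla_X+\nabla_U$ only where a term changes type (vertical to horizontal or vice versa).

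For the vertical fields $BX$ and $\phi U$:
\begin{itemize}
\item keep the vertical parts as $\mathcal{V}\nabla_{\dot{\alpha}}BX$ and $\mathcal{V}\nabla_{\dot{\alpha}}\phi U$;
\item by (\ref{EQ2.10}) and (\ref{EQ2.12}), the horizontal parts are $A_XBX+T_UBX$ and $A_X\phi U+T_U\phi U$.
\end{itemize}
For the horizontal fields $CX$ and $\omega U$:
\begin{itemize}
\item keep the horizontal parts as $\mathcal{H}\nabla_{\dot{\alpha}}CX$ and $\mathcal{H}\nabla_{\dot{\alpha}}\omega U$;
\item by (\ref{EQ2.11}) and (\ref{EQ2.13}), the vertical parts are $A_XCX+T_UCX$ and $A_X\omega U+T_U\omega U$.
\end{itemize}

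Finally, a vector vanishes exactly when both its vertical and horizontal components vanish. Collecting the vertical components gives (\ref{f8}), and collecting the horizontal components gives (\ref{f9}). Both implications of the equivalence follow, since every step above is reversible. The remaining work is bookkeeping: check that $T_U$ and $A_X$ send horizontal fields to vertical ones and vertical fields to horizontal ones, so that each term lands in the correct equation.
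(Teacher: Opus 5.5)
Your proof is correct and takes essentially the paper's route: both show $\nabla_{\dot{\alpha}}J\dot{\alpha}=J\nabla_{\dot{\alpha}}\dot{\alpha}$ and then split this vector into vertical and horizontal parts via (\ref{EQ2.10})--(\ref{EQ2.13}). Your first step, setting $X=Y=\dot{\alpha}$ in (\ref{eq-ka2}), is just a more direct way of removing the $\nabla J$ term than the paper's differentiation of $J^{2}\dot{\alpha}=-\dot{\alpha}$ followed by (\ref{eq-c3}) and the antisymmetry (\ref{eq-c2}).
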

\noindent
{\bf{Proof.}} Let $\alpha: I \rightarrow M$ be a regular curve on $M$. Since $J ^{2}\dot{\alpha}=-\dot{\alpha}$. Taking the covariant
derivative of this, 
we have
\begin{equation}
\left( \nabla _{\dot{\alpha}}J \right) J \dot{\alpha}+J \left(
\nabla _{\dot{\alpha}}J \dot{\alpha}\right) =-\nabla _{\dot{\alpha}}\dot{\alpha}.
\label{eq-1}
\end{equation}%
Since $U(t)$ and $X(t)$ are the vertical and horizontal parts of the tangent
vector field $\dot{\alpha}(t)=W$ of $\alpha(t)$, that is, $\dot{\alpha}=U+X$. So (\ref{eq-1}%
) becomes
\begin{eqnarray}
-\nabla _{\dot{\alpha}}\dot{\alpha} &=&J \left( \nabla _{U+X}J
(U+X)\right) +P_{\dot{\alpha}}J \dot{\alpha}+Q_{\dot{\alpha}}J \dot{\alpha}
\nonumber \\
&=&J \left( \nabla _{U}J U+\nabla _{X}J U+\nabla
_{U}J X+\nabla _{X}J X\right) +P_{\dot{\alpha}}J \dot{\alpha}+Q_{%
\dot{\alpha}}J \dot{\alpha}  \nonumber \\
&=&J \left( \nabla _{U}(\phi U+ \omega U)+\nabla _{X}(\phi U+ \omega U)+\nabla
_{U}\left( B X+ C X\right) +\nabla _{X}\left( B X+ C
X\right) \right)   \nonumber \\
&&+P_{\dot{\alpha}}J \dot{\alpha}+Q_{\dot{\alpha}}J \dot{\alpha}.  \label{eq-2}
\end{eqnarray}%
Using (\ref{EQ2.10})-(\ref{EQ2.13}) in (\ref{eq-2}), we get
\begin{eqnarray}
-\nabla _{\dot{\alpha}}\dot{\alpha} &=&J \left( {\cal H}\left( \nabla _{\dot{\alpha}%
}\omega U+\nabla _{\dot{\alpha}}C X\right) +A_{X}B X+A_{X}C
X+A_{X}\omega U +A_{X}\phi U + {\cal V}\nabla_{X}\phi U\right.   \nonumber \\
&&\left. +T_{U}C X+T_{U}B X+{\cal V}\nabla _{X}B
X+T_{U}\omega U+\widehat{\nabla}_{U}B X +T_U \phi U+\widehat{\nabla}_{U}\phi U \right)\nonumber \\
&& +P_{\dot{\alpha}}J \dot{\alpha}%
+Q_{\dot{\alpha}}J \dot{\alpha}.  \label{eq-3}
\end{eqnarray}%
Let $Y,Z\in TM$. Since $J ^{2}Z=-Z$, on differentiation, we have
\[
J \left( \nabla _{Y}J Z\right) +\left( \nabla _{Y}J
\right) J Z=-\nabla _{Y}Z,
\]%
\[
J ^{2}\left( \nabla _{Y}Z\right) +J \left( \nabla _{Y}J
\right) Z+\left( \nabla _{Y}J \right) J Z=-\nabla _{Y}Z,
\]%
using (\ref{eq-c1}) in above, we obtain
\begin{equation}
J \left( P_{Y}Z+Q_{Y}Z\right) =-P_{Y}J Z-Q_{Y}J Z.
\label{eq-c3}
\end{equation}%
By (\ref{eq-c3}), we have
\[
J \left( P_{\dot{\alpha}}J \dot{\alpha}+Q_{\dot{\alpha}}J \dot{\alpha}\right)
=P_{\dot{\alpha}}\dot{\alpha}+Q_{\dot{\alpha}}\dot{\alpha},
\]%
since $P$ and $Q$ are antisymmetric, so
\begin{equation}
J \left( P_{\dot{\alpha}}J \dot{\alpha}+Q_{\dot{\alpha}}J \dot{\alpha}\right)
=0.  \label{eq-c4}
\end{equation}%
Using (\ref{eq-c4}) and equating the vertical and horizontal part of (\ref%
{eq-3}), we obtain

$$
\begin{aligned}
{\mathcal{V}} J \nabla_{\dot{\alpha}} \dot{\alpha} & ={\mathcal{V}} \nabla_{\dot{\alpha}} B X+A_X C X+ {\mathcal{V}} \nabla_{\dot{\alpha}}\phi U+ A_X \omega U+T_U CX + T_U \omega U, \\
{\mathcal{H}} J \nabla_{\dot{\alpha}} \dot{\alpha} & ={\mathcal{H}} \nabla_{\dot{\alpha}} C X+{\mathcal{H}} \nabla_{\dot{\alpha}} \omega U+A_X B X+T_U B X + A_X \phi U + T_U\phi U .
\end{aligned}
$$
Now, $\alpha$ is a geodesic on $M$ if and only if ${\mathcal{V}} J \nabla_{\dot{\alpha}} \dot{\alpha}=0$ and ${\mathcal{H}} J \nabla_{\dot{\alpha}} \dot{\alpha}=0$, which completes the proof.

\begin{theorem} 
Let $F$ be a generic Riemannian map from a nearly K\"ahler manifold $(M, g_1, J)$ to a Riemannian manifold  $(N, g_2).$ Then, $F$ is a Clairaut generic Riemannian map with $\tilde{r}=e^f$ if and only if 
$$\begin{aligned}
    &g_{1 \alpha(t)}\left( {\mathcal{V}} \nabla_{\dot{\alpha}}\phi U +A_X C X+\right. \left.T_U C X+\left(A_X+T_U\right) \omega U, B X\right)\\
  &  +g_{1 \alpha(t)}\left(A_X B X +\left(A_X+T_U\right) \phi U+T_U B X+{\mathcal{H}} \nabla_{\dot{\alpha}} \omega U, C X\right)+ g_{1 \alpha(t)}(U, U) \frac{d f}{d t}=0,
\end{aligned}
$$
where $\alpha: I \rightarrow M$ is a geodesic on $M$ and $X, U$ are horizontal and vertical components of $\dot{\alpha}(t)$.
\end{theorem}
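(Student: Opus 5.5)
We follow the standard route for Clairaut-type characterizations (as in Bishop's and \c Sahin's arguments). We differentiate the Clairaut quantity along a geodesic and then rewrite the derivative using the previous Lemma.

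\textbf{Step 1: reduce the Clairaut condition to an ODE.} Let $\alpha:I\to M$ be a geodesic with $\dot\alpha=U+X$, and put $c=g_1(\dot\alpha,\dot\alpha)$, which is constant. The angle $\theta(t)$ between $\dot\alpha$ and the horizontal space satisfies
\[
g_1(U,U)=c\sin^2\theta(t),\qquad g_1(X,X)=c\cos^2\theta(t).
\]
Hence $(e^{f\circ\alpha})\sin\theta$ is constant if and only if $e^{2f\circ\alpha}g_1(U,U)$ is constant. Differentiating, this holds if and only if
\[
\frac{d}{dt}g_1(U,U)+2g_1(U,U)\frac{df}{dt}=0 .
\]
Equivalently, since $g_1(U,U)+g_1(X,X)=c$, it holds if and only if $-\frac{d}{dt}g_1(X,X)+2g_1(U,U)\frac{df}{dt}=0$. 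So $F$ is Clairaut with $\tilde r=e^f$ exactly when this identity holds along every geodesic.

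\textbf{Step 2: express $\frac{d}{dt}g_1(U,U)$ through the Lemma.} Since $J$ is an isometry, $g_1(U,U)=g_1(JU,JU)$, and similarly for $X$. We therefore use the decompositions $JU=\phi U+\omega U$ and $JX=BX+CX$, with $\phi U,BX$ vertical and $\omega U,CX$ horizontal. The natural quantity to differentiate is $g_1(X,X)=g_1(BX,BX)+g_1(CX,CX)$, which gives
\[
\frac{d}{dt}g_1(X,X)=2g_1(\mathcal V\nabla_{\dot\alpha}BX,BX)+2g_1(\mathcal H\nabla_{\dot\alpha}CX,CX).
\]
Now substitute from (\ref{f8}) and (\ref{f9}), which hold because $\alpha$ is a geodesic. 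From (\ref{f8}), $\mathcal V\nabla_{\dot\alpha}BX=-(A_XCX+\mathcal V\nabla_{\dot\alpha}\phi U+(A_X+T_U)\omega U+T_UCX)$. From (\ref{f9}), $\mathcal H\nabla_{\dot\alpha}CX=-(\mathcal H\nabla_{\dot\alpha}\omega U+A_XBX+T_UBX+(A_X+T_U)\phi U)$.

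\textbf{Step 3: assemble.} Inserting these into the identity of Step 1 and dividing by $2$ produces
\[
g_1\bigl(\mathcal V\nabla_{\dot\alpha}\phi U+A_XCX+T_UCX+(A_X+T_U)\omega U,BX\bigr)+g_1\bigl(A_XBX+(A_X+T_U)\phi U+T_UBX+\mathcal H\nabla_{\dot\alpha}\omega U,CX\bigr)+g_1(U,U)\frac{df}{dt}=0,
\]
which is the claimed condition. Each step is an equivalence, so the condition is both necessary and sufficient.

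\textbf{Expected obstacle.} The main difficulty is the differentiation in Step 2. On a nearly K\"ahler manifold $\nabla J\neq0$, so one must check that the metric derivative of $g_1(JX,JX)$ along $\dot\alpha$ really equals $2g_1(\nabla_{\dot\alpha}JX,JX)$ split into the $BX$ and $CX$ parts. This does hold, because it is just metric compatibility applied to the vector field $JX$ along $\alpha$; the correction terms $(\nabla_{\dot\alpha}J)$ were already absorbed in the Lemma through (\ref{eq-c4}).

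A related point is the sign bookkeeping. Since $g_1(U,U)=c-g_1(X,X)$, differentiating $X$ rather than $U$ changes the sign, and one must make sure the sign in front of $\frac{df}{dt}$ matches the statement. The identity of Step 1 is $-\frac{d}{dt}g_1(X,X)+2g_1(U,U)\frac{df}{dt}=0$. Substituting Step 2 turns $-\frac{d}{dt}g_1(X,X)$ into $+2(\text{the bracketed inner products})$, which gives exactly the stated sign.
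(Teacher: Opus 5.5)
Your proposal is correct and follows the paper's route: you differentiate $g_1(X,X)=g_1(BX,BX)+g_1(CX,CX)$ along the geodesic, substitute the Lemma's equations (\ref{f8}) and (\ref{f9}), and combine the result with the derivative of the Clairaut quantity. Your version is slightly cleaner in two places. You phrase the Clairaut condition as constancy of $e^{2f\circ\alpha}g_1(U,U)$ and get (\ref{f6}) directly from metric compatibility, whereas the paper differentiates $\theta$ and multiplies by $a\sin\theta$, which is not reversible where $\sin\theta=0$, and reaches (\ref{f6}) through a detour via $F_*$ and the second fundamental form.
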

\noindent {\bf Proof:} Let $\alpha: I \rightarrow M$ be a geodesic on $M$ with $U(t)={\mathcal{V}} \dot{\alpha}(t)$ and $X(t)= {\mathcal{H}} \dot{\alpha}(t), \theta(t)$ denote the angle in {\rm$[ 0,\pi ]$} between $\dot{\alpha}(t)$ and $X(t)$. Assuming $a= \|\dot{\alpha}(t)\|^2$, we get
\begin{align}
& g_{1 \alpha(t)}(X(t), X(t))=a \cos ^2 \theta(t), \label{g1}\\
& g_{1 \alpha(t)}(U(t), U(t))=a \sin ^2 \theta(t).  \label{g2}
\end{align}
Differentiating (\ref{g1} ), we get
\begin{equation}
\frac{\mathrm{d}}{\mathrm{~d} t} g_{1 \alpha(t)}(X(t), X(t))= -2 a \cos \theta \sin \theta \frac{\mathrm{~d} \theta}{\mathrm{~d} t} .\label{f5}
\end{equation}
On the other hand, using (\ref{eq-ka1}), we get
\begin{equation}
\frac{\mathrm{d}}{\mathrm{~d} t} g_{1 \alpha(t)}(X, X)= \frac{\mathrm{d}}{\mathrm{~d} t} g_{1 \alpha(t)}(J X, J X) .
\label{g3}
\end{equation}
Since $F$ is generic Riemannian map, using (\ref{eq-2h}) in (\ref{g3}), we get
\begin{align}
\frac{\mathrm{d}}{\mathrm{~d} t} g_{1 \alpha(t)}(X(t), X(t))&= 2 g_{1 \alpha(t)}\left(\nabla_{\dot{\alpha}} B X, B X\right) \notag\\
& +2 g_2\left(F_*\left(\nabla_{\dot{\alpha}} C X\right), F_*(C X)\right) .
\label{g4}
\end{align}
Using (\ref{EQ2.15}) in (\ref{g4}), we obtain
$$
\begin{aligned}
\frac{\mathrm{d}}{\mathrm{~d} t} g_{1 \alpha(t)}(X(t), X(t))&= 2 g_{1 \alpha(t)}\left(\nabla_{\dot{\alpha}} B X, B X\right)+2 g_2\left(-\left(\nabla F_*\right)(\dot{\alpha}, C X)\right. \\
& \left.+\stackrel{F}\nabla_{\dot{\alpha}} F_*(C X), F_*(C X)\right).
\end{aligned}
$$
Since the second fundamental form of $F$ is linear, we obtain the following from the above equation:
\begin{align}
\frac{\mathrm{d}}{\mathrm{~d} t} g_{1 \alpha(t)}(X(t), X(t))&= 2 g_{1 \alpha(t)}\left(\nabla_{\dot{\alpha}} B X, B X\right)  +2 g_2\left(-\left(\nabla F_*\right)(U, C X)\right ) \notag \\
& -\left(\nabla F_*\right)(X, C X)  \left.+\nabla_{X+U}^F F_*(C X), F_*(C X)\right).
\label{f2}
\end{align}
In addition, from (\ref{EQ2.15}), (\ref{f1}) and (\ref{f2}), we get
\begin{align}
\frac{\mathrm{d}}{\mathrm{~d} t} g_{1 \alpha(t)}(X(t), X(t))&= 2 g_{1 \alpha(t)}\left(\nabla_{\dot{\alpha}} B X, B X\right)+2 g_2\left(-\nabla_U F_*(C X)\right.\notag \\
& +F_*\left(\nabla_U C X\right)+\nabla_X F_*(C X) \notag \\
& \left.+\nabla_U F_*(C X), F_*(C X)\right) .\label{f3}
\end{align}
Using (\ref{f4}) and (\ref{EQ2.15}) in (\ref{f3}), we obtain
\begin{align}
\frac{\mathrm{d}}{\mathrm{~d} t} g_{1 \alpha(t)}(X(t), X(t))&= 2 g_{1 \alpha(t)}\left(\nabla_{\dot{\alpha}} B X, B X\right)  +2 g_{1 \alpha(t)}\left({\mathcal{H}} \nabla_{\dot{\alpha}} C X, C X\right) .
\label{f6}
\end{align}
Now, from (\ref{f5}) and (\ref{f6}), we get
\begin{equation}
g_{1 \alpha(t)}\left(\nabla_{\dot{\alpha}} B X, B X\right)+g_{1 \alpha(t)}\left({\mathcal{H}} \nabla_{\dot{\alpha}} C X, C X\right)= -a \cos \theta \sin \theta \frac{\mathrm{~d} \theta}{\mathrm{~d} t}.\label{f7}
\end{equation}
Using (\ref{f8}) and (\ref{f9}) in (\ref{f7}), we get
\begin{align}
 &g_{1 \alpha(t)}\left( {\mathcal{V}} \nabla_{\dot{\alpha}}\phi U +A_X C X+T_U C X+\left(A_X+T_U\right) \omega U, B X\right) \notag \\
  &  +g_{1 \alpha(t)}\left(A_X B X +\left(A_X+T_U\right) \phi U+T_U B X+{\mathcal{H}} \nabla_{\dot{\alpha}} \omega U, C X\right)= a \cos \theta \sin \theta \frac{\mathrm{~d} \theta}{\mathrm{~d} t}. \label{f10}
\end{align}
Moreover, $F$ is a Clairaut Riemannian map with $\tilde{r}=e^f$ if and only if $\frac{\mathrm{d}}{\mathrm{d} t}\left(e^{f \circ \alpha} \sin \theta\right)=0$, that is, $e^{f \circ \alpha}\left(\cos \theta \frac{\mathrm{~d} \theta}{\mathrm{~d} t}+\sin \theta \frac{\mathrm{d} f}{\mathrm{~d} t}\right)=0$. Multiplying this by a non-zero factor $a \sin \theta$, we get
\begin{equation}
-a \cos \theta \sin \theta \frac{\mathrm{~d} \theta}{\mathrm{~d} t}=a \sin ^2 \theta \frac{\mathrm{~d} f}{\mathrm{~d} t}.\label{f11}
\end{equation}
Thus, from (\ref{g2}), (\ref{f10}) and (\ref{f11}), we get
$$
\begin{aligned}
 &g_{1 \alpha(t)}\left( {\mathcal{V}} \nabla_{\dot{\alpha}}\phi U +A_X C X+\right. \left.T_U C X+\left(A_X+T_U\right) \omega U, B X\right)\\
  &  +g_{1 \alpha(t)}\left(A_X B X +\left(A_X+T_U\right) \phi U+T_U B X+{\mathcal{H}} \nabla_{\dot{\alpha}} \omega U, C X\right)=-g_{1 \alpha(t)}(U, U) \frac{\mathrm{d}f}{\mathrm{~d} t},
\end{aligned}
$$
which completes the proof.
\begin{theorem}\label{th-R}
    Let $F$ be a Clairaut generic Riemannian map from a nearly K\"ahler manifold $(M,g_{1},J)$ to a Riemannian manifold $(N,g_{2})$ with $\tilde{r}=e^{f},$. Then at least one of the following statements is true:
    \begin{itemize}
        \item[\rm(i)] $f$ is constant on $\omega{\mathcal{D}}_{2}.$
       
        \item[\rm(ii)] For all $Y \in \Gamma(\operatorname{ker} F_*)^\perp$ and $V,W \in \Gamma\left({\mathcal{D}}_{2}\right)$
        \begin{equation}\label{eq-y1}
    g_1(V, W) C\operatorname{grad} f=-{\mathcal H}\nabla _V CW.
\end{equation}
    \end{itemize}
\end{theorem}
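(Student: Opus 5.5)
Our plan is to combine Theorem \ref{th-bis} with the nearly K\"ahler structure equations and then split the resulting identity along the decomposition $(\operatorname{ker}F_*)^\perp=\omega\mathcal{D}_2\oplus\mu$.

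\textbf{Step 1 (umbilicity).} By Theorem \ref{th-bis}, $F$ is a Clairaut generic Riemannian map with $\tilde r=e^f$ exactly when every fiber is totally umbilical with $H=-\operatorname{grad}f$. Combining this with (\ref{eq-2t}), we get
$$T_VW=-g_1(V,W)\operatorname{grad}f \qquad \text{for all } V,W\in\Gamma(\operatorname{ker}F_*).$$
We restrict to $V,W\in\Gamma(\mathcal{D}_2)$.

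\textbf{Step 2 (apply $J$).} Write $\nabla_VJW=(\nabla_VJ)W+J\nabla_VW$ and substitute $JW=\phi W+\omega W$ and $\nabla_VW=T_VW+\widehat\nabla_VW$. The term $J(T_VW)=-g_1(V,W)\,J\operatorname{grad}f$ then splits by (\ref{eq-2h}) into
$$-g_1(V,W)\,(B\operatorname{grad}f+C\operatorname{grad}f).$$
Compare the $\mu$-components of both sides. The left side gives $\mathcal{H}\nabla_V\omega W$ projected onto $\mu$. The right side gives the $\mu$-part of $\omega\widehat\nabla_VW$, which vanishes because $\omega$ takes values in $\omega\mathcal{D}_2$ (its $\mathcal{D}_1$-part maps into the vertical bundle). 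It also gives $-g_1(V,W)C\operatorname{grad}f$, plus the $\mu$-part of $P_VW$.

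\textbf{Step 3 (identify $CW$ with the paper's notation).} The statement writes $CW$ for $W$ vertical, which we read as the $\mu$-component of the horizontal part $\omega W$. Under this reading the identity becomes
$$g_1(V,W)C\operatorname{grad}f=-\mathcal{H}\nabla_VCW,$$
up to the torsion term $P_VW$. We handle $P_VW$ through its symmetry. By (\ref{eq-c2}) $P$ is antisymmetric, whereas the other terms are symmetric in $V,W$ after using the umbilic formula. Symmetrizing in $V,W$ therefore removes $P$, and taking $V=W$ kills it directly.

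\textbf{Step 4 (the dichotomy).} Either $\operatorname{grad}f$ has no component along $\omega\mathcal{D}_2$, i.e.\ $g_1(\operatorname{grad}f,\omega V)=0$ so that $f$ is constant along $\omega\mathcal{D}_2$, which is (i). Or it does, and then the $\mu$-projection identity of Step 3 is exactly (\ref{eq-y1}), which is (ii). To make this split precise we would pair the identity with $\omega\mathcal{D}_2$ and $\mu$ separately. The $\omega\mathcal{D}_2$-pairing produces the scalar equation
$$g_1(V,W)\,g_1(\operatorname{grad}f,\omega Z)=\cdots,$$
and whichever factor vanishes decides the case.

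The main obstacle is Step 3: controlling the nearly K\"ahler term $P_VW$ (only antisymmetry is available, not vanishing), and interpreting $CW$ for vertical $W$ consistently with the paper's notation. If symmetrization does not fully dispose of $P$, we would fall back on the diagonal case $V=W$ together with polarization.
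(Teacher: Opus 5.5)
Your Step 1 is right, and so is the idea of projecting $\nabla_VJW=(\nabla_VJ)W+J\nabla_VW$ onto $\mu$; this is an explicit form of the paper's pairing of $T_VW$ with $CY$. However, Step 3 does not reach (\ref{eq-y1}), for three reasons.

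\textbf{The missing term.} The horizontal part of $\nabla_VJW$ is $T_V\phi W+\mathcal{H}\nabla_V\omega W$. Since $\phi\mathcal{D}_2\subseteq\mathcal{D}_2$, the umbilic formula gives $T_V\phi W=-g_1(V,\phi W)\operatorname{grad}f$, and you dropped this term. Writing $(\cdot)_\mu$ for the orthogonal projection onto $\mu$, the correct comparison of $\mu$-components is
\[
(\mathcal{H}\nabla_V\omega W)_\mu=-g_1(V,W)\,C\operatorname{grad}f+(P_VW)_\mu+g_1(V,\phi W)\,(\operatorname{grad}f)_\mu .
\]

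\textbf{The symmetry claim.} You cannot assume that all terms other than $P$ are symmetric in $V,W$. By the display, the antisymmetric part of $(\mathcal{H}\nabla_V\omega W)_\mu$ is exactly $(P_VW)_\mu+g_1(V,\phi W)(\operatorname{grad}f)_\mu$. Symmetrizing, or setting $V=W$ and polarizing, therefore yields only
\[
\bigl(\mathcal{H}\nabla_V\omega W+\mathcal{H}\nabla_W\omega V\bigr)_\mu=-2g_1(V,W)\,C\operatorname{grad}f .
\]
But (ii) is asserted for every ordered pair $(V,W)$. Nothing in your argument makes the antisymmetric remainder vanish, and it need not vanish when $M$ is strictly nearly K\"ahler or $\phi|_{\mathcal{D}_2}\neq 0$.

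\textbf{The reading of $CW$.} Your reading is internally inconsistent. The $\mu$-component of $\omega W$ is identically zero, because $\omega W\in\omega\mathcal{D}_2\perp\mu$. Under your reading, then, $\mathcal{H}\nabla_VCW=0$ and (\ref{eq-y1}) would assert $C\operatorname{grad}f=0$. What you actually computed is the $\mu$-projection of $\mathcal{H}\nabla_V\omega W$, and projection onto $\mu$ does not commute with $\nabla_V$.

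Step 4 does not produce the dichotomy either. Your identity was derived with no hypothesis on $g_1(\operatorname{grad}f,\omega Z)$, so splitting into cases by that quantity does no work. The $\omega\mathcal{D}_2$-pairing that should give (i) is left as ``$\cdots$''.

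In the paper the two alternatives come from separate computations. For (i), the paper pairs $T_UV=-g_1(U,V)\operatorname{grad}f$ with $\omega W$ to get (\ref{g1.}). It then puts $W=U$ and exchanges $U,V$ to reach (\ref{f14...}). The equality case of Cauchy--Schwarz then gives $g_1(\operatorname{grad}f,\omega U)=0$ as soon as $\mathcal{D}_2$ contains a vector not collinear with $U$. For (ii), the paper pairs $T_VW$ with $CY$ and transfers $C$ onto $W$ by the ``Leibniz rule'' to get (\ref{eq-y}). If you carry out that transfer honestly through $J$, it reproduces your identity together with its remainder. So imitating it will not close the gap.

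Your computation does settle the rank-one case cleanly. If $\mathcal{D}_2$ has rank one, any $V,W\in\Gamma(\mathcal{D}_2)$ are pointwise collinear. Hence $P_VW=0$ and $g_1(V,\phi W)=0$, and $(\mathcal{H}\nabla_V\omega W)_\mu=-g_1(V,W)C\operatorname{grad}f$ holds for every pair. This is exactly the case in which the Cauchy--Schwarz route to (i) is unavailable.

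For rank at least two you need a genuine proof of (i). A careful derivation of (\ref{g1.}) has $g_1((\nabla_U\phi)V,W)-g_1(Q_UV,W)$ in place of $g_1(V,(\nabla_U\omega)W)$, with $(\nabla_U\phi)V$ as in (\ref{eq-abc}). At $W=U$ the $Q$-term vanishes, by (\ref{eq-c2}) and the skew-symmetry of $\nabla_VJ$. The $\phi$-term, however, is under control only when $\phi$ vanishes on $\mathcal{D}_2$.
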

{\bf Proof:} Let $F$ be a Clairaut generic Riemannian map from a nearly K\"ahler manifold to a Riemannian manifold. Then, using (\ref{eq-2t}) in Theorem \ref{th-bis}, we get
\begin{equation}
   T_U V=-g_1(U, V) \operatorname{grad} f \label{f12}
\end{equation}
for all $U, V \in \Gamma\left({\mathcal{D}}_{2}\right)$, which implies
\begin{equation}
    g_1\left(T_U V, \omega W\right)=-g_1(U, V) g_1(\operatorname{grad} f, \omega W)\label{f13}
\end{equation}
for all $W \in \Gamma\left({\mathcal D}_2\right)$. Now, from (\ref{eq-ka1}), (\ref{EQ2.10}) and (\ref{f13}), we get
\begin{equation}
    g_1\left(\nabla_U  V, \omega W\right)=-g_1(U, V) g_1(\operatorname{grad} f,\omega W) .\label{f14}
\end{equation}
Now
\begin{align}
g_1(\nabla_U V, \omega W) &= -g( V,  \nabla_U\omega W)\notag \\
&= -g_1\left(V, \omega \nabla_U W \right)-g_1\left(V,(\nabla_U\omega)  W \right)\notag\\
&= g_1\left(  \omega V,  \nabla_U W \right) -g_1\left(V,(\nabla_U\omega)  W \right)\notag\\
&=g_1\left(  \omega V,  T_U W \right) -g_1\left(V,(\nabla_U\omega)  W \right).
\label{f14.}
\end{align}
\noindent Expanding via the O'Neill tensor components or sub-distribution properties and using (\ref{f14}), (\ref{f14.}):
\begin{align}
g_1(U, V) g_1(\operatorname{grad} f,\omega W) = g_1(U, W) g_1(\operatorname{grad} f,\omega V)+g_1\left(V,(\nabla_U\omega)  W \right).
\label{g1.}
\end{align}
Taking $U=W$ and interchanging the roles of $U$ and $V$, we obtain \begin{equation}
g_1(V, V) g_1(\operatorname{grad} f,\omega U) = g_1(U, V) g_1(\operatorname{grad} f,\omega V).\label{f14..}\end{equation}
Using (\ref{g1.}) with $W=U$ in  (\ref{f14..}), we have 
\begin{equation}
g_1(\operatorname{grad} f,\omega U) =\dfrac{(g_1(U, V))^2}{g_1(U, U)g_1(V, V)}  g_1(\operatorname{grad} f,\omega U).\label{f14...}\end{equation}
If $\operatorname{grad} f \in \Gamma\left(\omega {\mathcal{D}}_{2}\right)$ and also considering the equality case of the Schwarz inequality, then $f$ is constant on $\omega {\mathcal{D}}_{2}$.\\


For the second part, now, from (\ref{EQ2.10}) and (\ref{f12}), we get
\begin{equation}\label{eq-z}
    g_1\left(\nabla_V W, CY\right)=-g_1(V, W) g_1(\operatorname{grad} f, CY)
\end{equation}
for all $Y \in \Gamma(\operatorname{ker} F_*)^\perp$. By Leibniz rule, we get
\begin{equation}
    g_1\left(\nabla_V W, CY\right)=-g_1({\mathcal H}\nabla _V CW+T_V CW, Y)
\end{equation}
which implies
\begin{equation}\label{eq-y}
    g_1(V, W) g_1(\operatorname{grad} f, CY)=g_1({\mathcal H}\nabla _V CW, Y).
\end{equation}

\begin{cor}
 Let $F$ be a Clairaut generic Riemannian map from a nearly K\"ahler manifold $(M,g_{1},J)$ to a Riemannian manifold $(N,g_{2})$ with $\tilde{r}=e^{f}$. Then the fibers of $F$ are totally geodesic if and only if ${F^*}{\nabla}_{V} CW=0$  for all  $V,W \in \Gamma\left({\mathcal{D}}_{2}\right).$
\end{cor}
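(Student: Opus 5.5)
We will read the corollary off Theorem \ref{th-R}(ii) together with Theorem \ref{th-bis}. The notation ${F^*}\nabla_V CW$ we interpret as the horizontal part ${\mathcal H}\nabla_V CW$, i.e. the part that survives under $F_*$; horizontal vectors are detected isometrically by $F_*$ because of (\ref{f4}).

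The fibers of a Clairaut Riemannian map with $\tilde r=e^f$ are totally umbilical with mean curvature vector $H=-\operatorname{grad} f$ by Theorem \ref{th-bis}, so $T_UV=-g_1(U,V)\operatorname{grad} f$ as in (\ref{f12}). Consequently, the fibers are totally geodesic if and only if $T\equiv 0$ on vertical vectors, if and only if $\operatorname{grad} f=0$ (restricted to its horizontal part, which is all of it since $H$ is horizontal).

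For the forward direction, assume the fibers are totally geodesic, so $\operatorname{grad} f=0$. The identity (\ref{eq-y}) derived in the proof of Theorem \ref{th-R} gives
\[ g_1({\mathcal H}\nabla_V CW,Y)=g_1(V,W)\,g_1(\operatorname{grad} f,CY)=0 \]
for all $Y\in\Gamma(\operatorname{ker}F_*)^\perp$. Hence ${\mathcal H}\nabla_V CW=0$, and applying $F_*$ gives ${F^*}\nabla_V CW=0$.

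For the converse, assume ${\mathcal H}\nabla_V CW=0$ for all $V,W\in\Gamma({\mathcal D}_2)$. By (\ref{eq-y}),
\[ g_1(V,W)\,g_1(\operatorname{grad} f,CY)=0 . \]
Taking $V=W\neq 0$ yields $g_1(\operatorname{grad} f,CY)=0$ for all horizontal $Y$, i.e. $\operatorname{grad} f\perp C((\operatorname{ker}F_*)^\perp)$. Since $C$ is the $\mu$-component of $J$, this shows $\operatorname{grad} f$ has no $\mu$-component. This holds provided $C$ maps onto $\mu$; I would verify this using $J\mu\subseteq\mu$, which follows from $J$-invariance of $\mu$ as the orthogonal complement of $\omega{\mathcal D}_2$ in $(\operatorname{ker}F_*)^\perp$.

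The main obstacle is the remaining $\omega{\mathcal D}_2$-component of $\operatorname{grad} f$, which (\ref{eq-y}) does not see. For this I would invoke alternative (i) of Theorem \ref{th-R}, or redo the computation (\ref{f13})--(\ref{f14...}) using the Cauchy--Schwarz equality argument to show that component vanishes. Once $\operatorname{grad} f=0$, we get $T\equiv0$ and the fibers are totally geodesic. If the $\omega{\mathcal D}_2$ part cannot be killed in general, the honest statement is that the equivalence holds for the $\mu$-part of the mean curvature, and I would flag this.
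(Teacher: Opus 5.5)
Your forward direction is the paper's argument: Theorem \ref{th-bis} gives $H=-\operatorname{grad} f$, totally geodesic fibers force $\operatorname{grad} f=0$, and (\ref{eq-y}) then gives ${\mathcal H}\nabla_V CW=0$. That is all the paper's proof establishes; the converse is never argued there. Your converse has a genuine gap that cannot be closed, because nothing in the hypothesis controls the $\omega{\mathcal D}_2$-component of $\operatorname{grad} f$. Appealing to alternative (i) of Theorem \ref{th-R} is not legitimate, since that theorem only asserts (i) \emph{or} (ii). The Cauchy--Schwarz step (\ref{f14...}) yields nothing unless ${\mathcal D}_2$ contains two non-proportional vectors. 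Moreover, if (\ref{g1.}) is redone correctly in the nearly K\"ahler generic setting, its last term becomes $g_1((\nabla_U\phi)V,W)-g_1(Q_UV,W)$, which need not vanish.

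In fact the converse is false. Take $F:\mathbb{C}\setminus\{0\}\to\mathbb{R}^2$, $F(z)=(|z|,0)$, with the flat K\"ahler structure. It is a Riemannian map of rank $1$ whose fibers are the circles $|z|=r$. Here $\operatorname{ker}F_*$ is spanned by $e_\theta$ and $Je_\theta=-e_r$ is horizontal, so ${\mathcal D}_1=0$, ${\mathcal D}_2=\operatorname{ker}F_*$, $\omega{\mathcal D}_2=(\operatorname{ker}F_*)^\perp$ and $\mu=0$. Since $C$ takes values in $\mu$, the condition ${\mathcal H}\nabla_V CW=0$ holds trivially. The fibers are umbilical with $H=-\frac1r e_r=-\operatorname{grad}\ln r$, so $F$ is Clairaut with $\tilde r=|z|$; classically, $r\sin\theta$ is the distance from the origin to a straight line. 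Yet the fibers are not totally geodesic. Here $\operatorname{grad} f=\frac1r e_r$ lies entirely in $\omega{\mathcal D}_2$, exactly the component you flagged, and (i) of Theorem \ref{th-R} fails. The same happens for $z\mapsto(|z|,0)$ on $\mathbb{C}^2\setminus\{0\}$, where ${\mathcal D}_1\neq0$.

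Your $\mu$-part step should also not rest on (\ref{eq-y}). The map $C$ is defined only on horizontal vectors. If $CW$ is read as the $\mu$-part of $JW=\phi W+\omega W$, it is zero, so (\ref{eq-y}) would force $C\operatorname{grad} f=0$ for every Clairaut generic map with ${\mathcal D}_2\neq0$. That fails for $(x_1,y_1,x_2,y_2)\mapsto(r,y_1,y_2,0)$ with $r=\sqrt{x_1^2+x_2^2}$, on $\{r>0\}\subset\mathbb{C}^2$ with $z_j=x_j+iy_j$. This is a Clairaut generic map with $\tilde r=r$, whose fibers are circles of radius $r$ in the totally real $(x_1,x_2)$-plane, and $\operatorname{grad} f=(x_1\partial_{x_1}+x_2\partial_{x_2})/r^2\in\mu$.

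What does hold, for every $V\in\Gamma({\mathcal D}_2)$, is
\[
{\mathcal P}_\mu\nabla_V\omega V=-g_1(V,V)\,C\operatorname{grad} f,
\]
where ${\mathcal P}_\mu$ is the orthogonal projection onto $\mu$. To see it, pair $\nabla_V\omega V=(\nabla_VJ)V+J\nabla_VV-\nabla_V\phi V$ with $\xi\in\Gamma(\mu)$ and use $(\nabla_VJ)V=0$, $J\mu=\mu$, $g_1(V,\phi V)=0$ and (\ref{f12}). With this identity your argument proves exactly the weaker statement you anticipated. When ${\mathcal D}_2\neq0$, the $\mu$-component of $H$ vanishes if and only if ${\mathcal P}_\mu\nabla_V\omega V=0$ for all $V\in\Gamma({\mathcal D}_2)$. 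Totally geodesic fibers require an additional hypothesis that kills the $\omega{\mathcal D}_2$-component.
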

 {\bf Proof:} From Theorem \rm{\ref{th-R}}, if the fibers are totally geodesic, then the mean curvature vector vanishes, and hence $\operatorname{grad}f=0$. Therefore, equation (\ref{eq-y}) reduces to
\[
 {F^*}{\nabla}_{V} CW=0.
\]

\begin{prop}
Let $F$ be a Clairaut generic Riemannian map from a nearly K\"ahler manifold $(M,g_{1},J)$ to a Riemannian manifold $(N,g_{2})$ with $\tilde{r}=e^{f}$ and $dim({\mathcal{V}})> 1.$ Then ${\mathcal{D}}_{1}$ defines a totally geodesic foliation on $M$ if and only if $$g_{1} ( Q_X Y- {\widehat{\nabla}}_{X}\phi Y -T_X \omega Y , \phi Z) + g_2( F_*(P_{X}Y - \nabla_{X}\omega Y), F_*( \omega Z) )=0$$ for $X,Y\in \Gamma({\mathcal{D}}_{1}) $ and $Z \in \Gamma({\mathcal{D}}_{2}).$     
\end{prop}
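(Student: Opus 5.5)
The plan is to reduce the statement to the vanishing of one inner product. Then $J$-invariance of $g_1$, the nearly K\"ahler splitting (\ref{eq-c1}), the O'Neill formulas (\ref{EQ2.10})--(\ref{EQ2.13}) and the Riemannian map condition (\ref{f4}) should convert that product into the displayed expression.

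First I would handle the reduction. ${\mathcal{D}}_{1}$ defines a totally geodesic foliation on $M$ exactly when $\nabla_X Y\in\Gamma({\mathcal{D}}_{1})$ for all $X,Y\in\Gamma({\mathcal{D}}_{1})$. Since $TM={\mathcal{D}}_{1}\oplus{\mathcal{D}}_{2}\oplus(\operatorname{ker}F_*)^{\perp}$, this means two things: $g_1(\nabla_XY,\xi)=0$ for horizontal $\xi$, and $g_1(\nabla_XY,Z)=0$ for $Z\in\Gamma({\mathcal{D}}_{2})$.

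The horizontal part is $T_XY$. Because $F$ is Clairaut, Theorem \ref{th-bis} together with (\ref{eq-2t}) gives $T_XY=-g_1(X,Y)\operatorname{grad}f$. The hypothesis $\dim({\mathcal{V}})>1$ is there for the usual Bishop-type argument that handles this term; it lets us absorb it, or shows that the nontrivial condition is the ${\mathcal{D}}_{2}$-component. I would treat the horizontal condition as accounted for in this way and concentrate on $g_1(\nabla_XY,Z)$.

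For the ${\mathcal{D}}_{2}$-component I would compute
\[
g_1(\nabla_XY,Z)=g_1(J\nabla_XY,JZ)=g_1\bigl(\nabla_XJY-(\nabla_XJ)Y,\ \phi Z+\omega Z\bigr).
\]
Here I use (\ref{eq-1v}) for $JZ$. Since $Y\in{\mathcal{D}}_{1}$ we have $JY=\phi Y$. I would nevertheless keep the general form $JY=\phi Y+\omega Y$ from (\ref{eq-1v}), so that the formula matches the statement. Next I split $(\nabla_XJ)Y=P_XY+Q_XY$ by (\ref{eq-c1}), and expand $\nabla_X\phi Y=T_X\phi Y+\widehat{\nabla}_X\phi Y$ and $\nabla_X\omega Y={\mathcal H}\nabla_X\omega Y+T_X\omega Y$. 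Pairing the vertical parts with $\phi Z$ and the horizontal parts with $\omega Z$ gives
\[
g_1(\nabla_XY,Z)=g_1\bigl(\widehat{\nabla}_X\phi Y+T_X\omega Y-Q_XY,\ \phi Z\bigr)+g_1\bigl({\mathcal H}\nabla_X\omega Y-P_XY,\ \omega Z\bigr).
\]
Since both arguments of the last term are horizontal, (\ref{f4}) rewrites it as $g_2\bigl(F_*(\nabla_X\omega Y-P_XY),F_*(\omega Z)\bigr)$; here $F_*$ kills the vertical part, so the projection ${\mathcal H}$ may be dropped. Multiplying through by $-1$ then gives exactly the expression in the statement. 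So that expression vanishes if and only if $\nabla_XY$ has no ${\mathcal{D}}_{2}$-component, which gives the equivalence.

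The main obstacle is the horizontal component $T_XY=-g_1(X,Y)\operatorname{grad}f$. The condition in the proposition does not see it, so it must be disposed of separately. I would first try to show $\operatorname{grad}f$ is vertical or zero, using $\dim({\mathcal V})>1$ and a Schwarz-equality argument as in the proof of Theorem \ref{th-R}. If that fails, I would interpret the statement as concerning the vertical leaves of ${\mathcal{D}}_{1}$ within the fibres, i.e.\ ${\mathcal{D}}_{1}$ being totally geodesic inside $\operatorname{ker}F_*$, which is what the computation above controls. A secondary routine check is keeping the signs of $Q$ and $P$ consistent with (\ref{eq-c2}).
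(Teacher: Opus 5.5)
There is a genuine gap in your expansion of $g_1(\nabla_XY,Z)$: a term is lost. You correctly write $\nabla_X\phi Y=T_X\phi Y+\widehat{\nabla}_X\phi Y$. But $T_X\phi Y$ is horizontal, so pairing the horizontal parts with $\omega Z$ also produces $g_1(T_X\phi Y,\omega Z)$, and this term is missing from your displayed identity. Write $E$ for the expression in the statement. Using the Clairaut relation $T_UV=-g_1(U,V)\operatorname{grad}f$ (cf. (\ref{f12})) for the second equality, the correct identity is
\[
g_1(\nabla_XY,Z)=-E+g_1(T_X\phi Y,\omega Z)=-E-g_1(X,\phi Y)\,g_1(\operatorname{grad}f,\omega Z).
\]
The extra term does not vanish for type reasons: for $X=\phi Y=JY$ one has $g_1(X,\phi Y)=g_1(Y,Y)$. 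So the equivalence you claim does not follow. Note that your treatment of the ${\mathcal{D}}_{2}$-component never uses the Clairaut hypothesis, which signals that something is missing.

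In the paper, this term is exactly where both hypotheses enter. It is rewritten via (\ref{f12}) as above. Then $\dim({\mathcal{V}})>1$ is used to assert statement (i) of Theorem \ref{th-R}, that $f$ is constant on $\omega{\mathcal{D}}_{2}$, i.e. $g_1(\operatorname{grad}f,\omega Z)=0$, which kills the term. The Schwarz-equality argument you mention is the tool the paper relies on, but it is aimed at $g_1(\operatorname{grad}f,\omega Z)$, not at $T_XY$.

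As for your ``main obstacle'': the paper does not treat $T_XY$ at all. It simply identifies ``${\mathcal{D}}_{1}$ totally geodesic'' with $g_1(\nabla_XY,Z)=0$ for $Z\in\Gamma({\mathcal{D}}_{2})$. However, the missing ingredient also settles the horizontal component. For $U\in\Gamma({\mathcal{D}}_{1})$, the nearly K\"ahler condition gives $\nabla_UJU=J\nabla_UU$. The horizontal part of the left side is $T_UJU=-g_1(U,JU)\operatorname{grad}f=0$. The horizontal part of the right side is $-g_1(U,U)\,C\operatorname{grad}f+\omega\widehat{\nabla}_UU$. Since $\mu\perp\omega{\mathcal{D}}_{2}$, this forces $C\operatorname{grad}f=0$, i.e. $\operatorname{grad}f\in\Gamma(\omega{\mathcal{D}}_{2})$ whenever ${\mathcal{D}}_{1}\neq0$. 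Hence $g_1(\operatorname{grad}f,\omega Z)=0$ for all $Z\in\Gamma({\mathcal{D}}_{2})$ gives $\operatorname{grad}f=0$, and so $T_XY=0$ as well.

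The same computation shows that, when ${\mathcal{D}}_{1}\neq0$, the fact you need is equivalent to the fibres being totally geodesic. It cannot be extracted from the algebra of your expansion and must be supplied separately. Whatever you cite for it (the paper relies on Theorem \ref{th-R}) carries the real weight of the proof.
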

{\bf Proof:} For $X,Y \in \Gamma({\mathcal{D}}_{1})$ and $Z \in \Gamma({\mathcal{D}}_{2})$ using \rm(\ref{eq-ka1}), \rm(\ref{eq-1v}) and \rm(\ref{eq-c1}), we have
$$
\begin{aligned}
    g_{1}(\nabla_{X}Y,Z)
    &= g_{1}(\nabla_{X}JY,JZ)-g_{1}((\nabla_{X}J)Y,JZ)\\
    &= g_{1}(\nabla_{X}(\phi Y + \omega Y),JZ)-g_{1}(P_{X}Y+Q_{X}Y,JZ)\\
    &= g_{1}(\nabla_{X}\phi Y,JZ)+g_{1}( JZ,\nabla_X \omega Y)-g_{1}(P_{X}Y,JZ)- g_1(Q_X Y, JZ)\\
    &= g_{1}(T_X \phi Y+ {\widehat{\nabla}}_{X}\phi Y, JZ) + g_{1}({\mathcal{H}} \nabla_{X}\omega Y +  T_X \omega Y, JZ) - g_{1}(P_{X}Y,\omega Z)- g_1(Q_X Y, \phi Z).
\end{aligned}
$$
Since $\mathcal{D}_1$ defines a totally geodesic foliation on $M$, if and only if \
$g_1(\nabla_XY,Z)=0 $  for all $X,Y\in \Gamma(\mathcal{D}_1)$ and $Z\in \Gamma(\mathcal{D}_2)$,

\vspace{.2cm}
\noindent
Using this together with \rm(\ref{EQ2.10}), we obtain 
$$
\begin{aligned}
g_{1}(T_X \phi Y+ {\widehat{\nabla}}_{X}\phi Y, JZ) &= - g_{1}({\mathcal{H}} \nabla_{X}\omega Y +  T_X \omega Y, JZ) + g_{1}(P_{X}Y,\omega Z)+ g_1(Q_X Y, \phi Z),\\
g_{1}(T_X \phi Y, \omega Z) &= -g_{1} ({\widehat{\nabla}}_{X}\phi Y, \phi Z) - g_{1}({\mathcal{H}} \nabla_{X}\omega Y , \omega Z )- g_1(  T_X \omega Y, \phi Z)\\
& + g_{1}(P_{X}Y,\omega Z)+ g_1(Q_X Y, \phi Z),\\
-g_{1}(X,\phi Y)g_{1}(\operatorname{grad}f, \omega Z)&= g_{1} ( Q_X Y- {\widehat{\nabla}}_{X}\phi Y -T_X \omega Y , \phi Z) + g_1( P_{X}Y - \nabla_{X}\omega Y,  \omega Z ).
\end{aligned}
$$
Since $F$ is a Clairaut Riemannian map, from \rm(\ref{f12}), we get
 \begin{align}
 -g_{1}(X,\phi Y)g_{1}(\operatorname{grad}f, \omega Z)= g_{1} ( Q_X Y- {\widehat{\nabla}}_{X}\phi Y -T_X \omega Y , \phi Z) + g_2( F_*(P_{X}Y - \nabla_{X}\omega Y), F_*( \omega Z) ).
 \end{align}  

Since $dim({\mathcal{V}}) > 1,$ so $f$ is constant on $\omega {\mathcal{D}}_{2}.$ Therefore, from above equation, we get $g_{1} ( Q_X Y- {\widehat{\nabla}}_{X}\phi Y -T_X \omega Y , \phi Z) + g_2( F_*(P_{X}Y - \nabla_{X}\omega Y), F_*( \omega Z) )=0,$
which proves the assertion.

\begin{prop}
   Let $F$ be a Clairaut generic Riemannian map from a nearly K\"ahler manifold $(M,g_{1},J)$ to a Riemannian manifold $(N,g_{2})$ with $\tilde{r}=e^{f}.$ Then, ${\mathcal{D}}_{2}$ defines a totally geodesic foliation on $M$ if and only if
   
   $$ g_{2}(F_{\ast}(P_{X}Z-\nabla_{X}\omega Z),F_{\ast}(\omega Y)) = g_{1}(\widehat{\nabla}_{X}\phi Z+T_{X}\omega Z -Q_X Z, \phi Y) $$
for $X,Y \in \Gamma({\mathcal{D}}_{2})$ and $Z \in \Gamma({\mathcal{D}}_{1})$.
   \end{prop}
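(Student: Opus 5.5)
We follow the same scheme as in the preceding proposition for ${\mathcal{D}}_{1}$, with the roles of the two distributions swapped. ${\mathcal{D}}_{2}$ defines a totally geodesic foliation on $M$ if and only if $g_{1}(\nabla_{X}Y,Z)=0$ for all $X,Y\in\Gamma({\mathcal{D}}_{2})$ and $Z\in\Gamma({\mathcal{D}}_{1})$. Metric compatibility gives $g_1(\nabla_X Y,Z)=-g_1(Y,\nabla_X Z)$, so the condition is equivalent to $g_1(\nabla_X Z,Y)=0$. This puts the ${\mathcal{D}}_{1}$ field $Z$ under the derivative, matching the roles of $Z$ and $Y$ in the stated formula.

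Next we transfer $J$ using (\ref{eq-ka1}) and (\ref{eq-c1}):
$$g_1(\nabla_X Z,Y)=g_1(J\nabla_X Z,JY)=g_1(\nabla_X JZ,JY)-g_1(P_XZ+Q_XZ,JY).$$
We then decompose $JZ=\phi Z+\omega Z$ by (\ref{eq-1v}) and $JY=\phi Y+\omega Y$. Since $Z\in{\mathcal{D}}_{1}$ one actually has $\omega Z=0$, but we keep the term formally so that the final expression has the stated form.

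Now we split $\nabla_X\phi Z=T_X\phi Z+\widehat{\nabla}_X\phi Z$ by (\ref{EQ2.10}) and $\nabla_X\omega Z={\mathcal H}\nabla_X\omega Z+T_X\omega Z$ by (\ref{EQ2.11}). Pairing vertical parts with $\phi Y$ and horizontal parts with $\omega Y$, and using that $P$ is horizontal and $Q$ vertical, gives
$$g_1(\nabla_XZ,Y)=g_1(\widehat{\nabla}_X\phi Z+T_X\omega Z-Q_XZ,\phi Y)+g_1(T_X\phi Z+{\mathcal H}\nabla_X\omega Z-P_XZ,\omega Y).$$

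The remaining term $g_1(T_X\phi Z,\omega Y)$ is handled by the Clairaut condition. By Theorem \ref{th-bis} and (\ref{f12}), $T_X\phi Z=-g_1(X,\phi Z)\operatorname{grad}f$. Here $g_1(X,\phi Z)=0$, because $X\in{\mathcal{D}}_{2}$ and $\phi Z\in{\mathcal{D}}_{1}$ are orthogonal. So this term vanishes outright, which is cleaner than the previous proposition and needs no assumption $\dim{\mathcal V}>1$.

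Finally, the horizontal vectors $P_XZ-\nabla_X\omega Z$ (horizontal part) and $\omega Y$ lie in $(\operatorname{ker}F_*)^\perp$. By (\ref{f4}) their $g_1$-product equals $g_2(F_*(\cdot),F_*(\cdot))$. Setting the total expression to zero then yields exactly the stated equivalence.

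The main obstacle is bookkeeping: tracking signs and making sure each term lands in the correct vertical or horizontal slot. Care is also needed to interpret $F_*(\nabla_X\omega Z)$ as $F_*({\mathcal H}\nabla_X\omega Z)$, which is legitimate since $F_*$ kills the vertical part.
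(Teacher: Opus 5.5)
Your argument is essentially the paper's proof: the same $J$-transfer, the same splitting of $\nabla_X\phi Z$ and $\nabla_X\omega Z$ via (\ref{EQ2.10})--(\ref{EQ2.11}), the same use of umbilicity (\ref{f12}) together with $g_1(X,\phi Z)=0$ to discard $g_1(T_X\phi Z,\omega Y)$, and the same passage to $g_2$ via (\ref{f4}), with your signs correct. The one weak point, which you inherit from the paper, is the opening equivalence: $g_1(\nabla_XY,Z)=0$ for $Z\in\Gamma(\mathcal{D}_1)$ only controls the $\mathcal{D}_1$-component of $\nabla_XY$, whereas leaves of $\mathcal{D}_2$ being totally geodesic in $M$ also requires $\mathcal{H}\nabla_XY=T_XY=-g_1(X,Y)\operatorname{grad}f$ to vanish (e.g.\ if $\mathcal{D}_1=0$ the displayed condition is vacuous, but the fibres are totally geodesic only when $\operatorname{grad}f=0$), so as written the ``if'' direction only gives that $\mathcal{D}_2$ is totally geodesic inside the fibres, i.e.\ $\widehat{\nabla}_XY\in\Gamma(\mathcal{D}_2)$; for total geodesy in $M$ you must add $\operatorname{grad}f=0$.
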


\noindent 
{\bf Proof:} 
 For $X,Y \in \Gamma({\mathcal{D}}_{2})$ and $Z \in \Gamma({\mathcal{D}}_{1})$ using \rm(\ref{eq-ka1}), \rm(\ref{eq-1v}) and \rm(\ref{eq-c1}), we have
$$
\begin{aligned}
    g_{1}(\nabla_{X}Y,Z)
    &= -g_{1}(\nabla_{X}JZ,JY)+g_{1}((\nabla_{X}J)Z,JY)\\
    &= -g_{1}(\nabla_{X}(\phi Z + \omega Z),JY)+g_{1}(P_{X}Z+Q_{X}Z,JY)\\
    &= -g_{1}(\nabla_{X}\phi Z,JY)-g_{1}(\nabla_{X}\omega Z,JY)+g_{1}(P_{X}Z,JY)+g_{1}(Q_{X}Z,JY),
\end{aligned}
$$
since ${\mathcal{D}}_{2}$ defines a totally geodesic foliation on $M,$ therefore, using \rm(\ref{EQ2.10}), \rm(\ref{EQ2.11}) and \rm(\ref{f12}), we obtain
$$
\begin{aligned}
  g_{1}(P_{X}Z,\omega Y)+g_{1}(Q_{X}Z,\phi Y)&=g_{1}(T_{X}\phi Z+ {\widehat{\nabla}}_{X}\phi Z,\phi Y+ \omega Y)+ g_{1}({\mathcal{H}}\nabla_{X}\omega Z + T_{X}\omega Z,\phi Y + \omega Y)\\
  & = g_{1}(T_{X}\phi Z, \omega Y)+ g_{1}( {\widehat{\nabla}}_{X}\phi Z,\phi Y)+ g_{1}({\mathcal{H}}\nabla_{X}\omega Z, \omega Y)+g_{1}(T_{X}\omega Z,\phi Y ). 
\end{aligned}
$$
$$
\begin{aligned}
    g_{2}(F_{\ast}(P_{X}Z),F_{\ast}(\omega Y))+g_{1}(Q_{X}Z,\phi Y)=& -g_{1}(X, \phi Z)g_{1}(\operatorname{grad}f, \omega Y)+ g_{1}(\widehat{\nabla}_{X}\phi Z, \phi Y)\\&+ g_{2}(F_{\ast}(\nabla_{X}\omega Z), F_{\ast}(\omega Y))+g_{1}(T_{X}\omega Z, \phi Y).
\end{aligned}
$$
From (\ref{h6}), we have
$$
\begin{aligned}
    g_{2}(F_{\ast}(P_{X}Z),F_{\ast}(\omega Y))-g_{2}(F_{\ast}(\nabla_{X}\omega Z), F_{\ast}(\omega Y))=& -g_{1}(X, P_{1} Z)g_{1}(\operatorname{grad}f, \omega Y)+g_{1}(\widehat{\nabla}_{X}\phi Z, \phi Y) \\&+g_{1}(T_{X}\omega Z, \phi Y)- g_{1}(Q_{X}Z,\phi Y).
\end{aligned}
$$
$$
\begin{aligned}
 g_{2}(F_{\ast}(P_{X}Z-\nabla_{X}\omega Z),F_{\ast}(\omega Y)) =&  -g_{1}(X, P_{1} Z)g_{1}(\operatorname{grad}f, \omega Y)+g_{1}(\widehat{\nabla}_{X}\phi Z+T_{X}\omega Z -Q_X Z, \phi Y).
\end{aligned}
$$
Since $X \in \Gamma({\mathcal{D}}_{2})$ and $P_1 Z \in \Gamma({\mathcal{D}}_{1})$ are orthogonal to each other. Therefore, from the above equation, we get 
$$
\begin{aligned}
 g_{2}(F_{\ast}(P_{X}Z-\nabla_{X}\omega Z),F_{\ast}(\omega Y)) =& g_{1}(\widehat{\nabla}_{X}\phi Z+T_{X}\omega Z -Q_X Z, \phi Y),
\end{aligned}
$$
 which completes the proof.
 \begin{ex-new}
     Let $\left({\mathbb{R}}^{10},J, g_1\right)$ be a nearly K\"ahler manifold endowed with the Euclidean metric $g_1$ on ${\mathbb{R}}^{10}$ given by
$$
g_1=\sum_{i=1}^{10} d x_i^2,
$$
\noindent
canonical complex structure
$$
J\left(x_1, x_2, x_3, x_4, x_5, x_6,x_7, x_8, x_9, x_{10}\right)=\left( -x_2, x_1, -x_4, x_3, -x_6,x_5, -x_8, x_7, -x_{10}, x_9\right)
$$
and the $J$-basis is $\left\{\left.e_i=\dfrac{\partial}{\partial x_i} \right\rvert\, i=1, \ldots, 10\right\}$. Let $\left({\mathbb{R}}^7, g_2\right)$ be a Riemannian manifold endowed with the metric $g_2=\displaystyle\sum_{i=1}^7 d y_i^2$.

Consider a map $F:\left({\mathbb{R}}^{10},J, g_1\right) \rightarrow\left({\mathbb{R}}^7, g_2\right)$ defined by

$$
F\left(x_1, x_2, x_3, x_4,x_5,x_6,x_7,x_8,x_9,x_{10}\right)=\left(x_2,x_1,\frac{x_3+x_4}{\sqrt{2}}, 0,0,\frac{x_8+x_{10}}{\sqrt{2}}, \frac{x_7+x_{9}}{\sqrt{2}}\right) .
$$
\noindent
Then, the Jacobian matrix of $F$ is 
\[
\begin{pmatrix}
0& 1 & 0 & 0 & 0 & 0 & 0 & 0 & 0 & 0  \\
1 & 0 & 0 & 0 & 0 & 0 & 0 & 0 & 0 & 0\\
0 & 0 & \frac{1}{\sqrt{2}} & \frac{1}{\sqrt{2}} & 0 & 0 & 0 & 0 & 0 & 0\\
0 & 0 & 0 & 0 & 0 & 0 & 0 & 0 & 0 & 0\\
0 & 0 & 0 & 0 & 0 & 0 & 0 & 0 & 0 & 0\\
0 & 0 & 0 & 0 & 0 & 0 & 0 & \frac{1}{\sqrt{2}}  & 0 & \frac{1}{\sqrt{2}} \\
0 & 0 & 0 & 0 & 0 & 0 & \frac{1}{\sqrt{2}}  & 0 & \frac{1}{\sqrt{2}}  & 0
\end{pmatrix}.
\]
Then, by direct calculations, we have
$$
\begin{aligned}
 {\operatorname{ker}} F_*=&\operatorname{span}\left\{V_1=\frac{\partial}{\partial x_3}-\frac{\partial}{\partial x_4}, V_2=\frac{\partial}{\partial x_8}-\frac{\partial}{\partial x_{10}},V_3=\frac{\partial}{\partial x_7}-\frac{\partial}{\partial x_9},V_4=\frac{\partial}{\partial x_5}, V_5=\frac{\partial}{\partial x_6}\right\}, \\
 \left(\operatorname{ker} F_*\right)^{\perp}=&\operatorname{span}\left\{X_1=\frac{\partial}{\partial x_1}, X_2=\frac{\partial}{\partial x_2},X_{3}=\frac{\partial}{\partial x_3}+\frac{\partial}{\partial x_4},X_{4}=\frac{\partial}{\partial x_8}+\frac{\partial}{\partial x_{10}},X_{5}=\frac{\partial}{\partial x_7}+\frac{\partial}{\partial x_9}\right\},\\
  {\operatorname{range} } F_*=&\operatorname{span}\left\{ \frac{\partial}{\partial y_1}, \frac{\partial}{\partial y_2}, \frac{\partial}{\partial y_3} , \frac{\partial}{\partial y_6} , \frac{\partial}{\partial y_7}   \right\},\\
 \left(\operatorname{range} F_*\right)^{\perp}=&\operatorname{span}\left\{ \frac{\partial}{\partial y_4}, \frac{\partial}{\partial y_5}  \right\},
\end{aligned}
$$
\noindent
where $\displaystyle\left\{\frac{\partial}{\partial x_1},\frac{\partial}{\partial x_2},\frac{\partial}{\partial x_3},\frac{\partial}{\partial x_4},\frac{\partial}{\partial x_5},\frac{\partial}{\partial x_6},\frac{\partial}{\partial x_7},\frac{\partial}{\partial x_8},\frac{\partial}{\partial x_9},\frac{\partial}{\partial x_{10}} \right\}$,$ \displaystyle\left\{ \frac{\partial}{\partial y_1},\frac{\partial}{\partial y_2},\frac{\partial}{\partial y_3},\frac{\partial}{\partial y_4},\frac{\partial}{\partial y_5},\frac{\partial}{\partial y_6},\frac{\partial}{\partial y_7} \right\}$ are bases on $T_{p} {\mathbb{R}}^{10}$  and $T_{F(p)} {\mathbb{R}}^{7}$, respectively, for all $p \in {\mathbb{R}}^{10}$. By direct computations, we can see that $F_*(X_1)=\dfrac{\partial}{\partial y_2}$, $F_*(X_2)=\dfrac{\partial}{\partial y_1}$,$F_*(X_3)=\dfrac{\partial}{\partial y_3}$,$F_*(X_4)=\dfrac{\partial}{\partial y_6}$,$F_*(X_5)=\dfrac{\partial}{\partial y_7}$. We know that $F$ is Riemannian map if and only if $g_{2}(F_{*}X_i,F_{*}X_j)= g_{1}(X_i,X_j) $ for $i,j=1,2,3,4,5$ and for all $X_i, X_j \in \left(\operatorname{ker} F_*\right)^{\perp} $. Thus, $F$ is a Riemannian map. Moreover, $J V_4=-V_5, J V_5=V_4, J V_3=-V_{2},JV_{2}=V_{3},J V_1=-X_3,$ therefore ${\mathcal{D}}_1=\operatorname{span}\left\{V_2, V_3, V_4,V_5\right\}$ and ${\mathcal{D}}_2=$ span $\left\{V_1\right\}$. Also, $J{\mathcal{D}}_1={\mathcal{D}}_1$ and ${\mathcal{D}}_2 \cap J{\mathcal{D}}_2=\{0\}$. \\ Thus, we can say that $F$ is a generic Riemannian map.\\

\noindent
Consider the Koszul formula for Levi-Civita connection $\nabla$ for ${\mathbb{R}}^{10}$
$$
2 g\left(\nabla_U V, W\right)=U g(V, W)+V g(W, U)-W g(U, V)-g([V, W], U)-g([U, W], V)+g([U, V], W)
$$
\noindent
for all $U, V, W \in {\mathbb{R}}^{10}$. By simple calculations, we obtain
$$
\nabla_{e_i} e_j=0 \text { for all } i, j=1, \ldots, 10.
$$
\noindent
Hence $T_U V=T_V U=T_U U=0$ for all $U, V \in \Gamma\left(\operatorname{ker} F_*\right)$. Therefore, the fibers of $F$ are totally geodesic. Thus, $F$ is Clairaut.
 \end{ex-new}
 
\begin{ex-new}
      Let $\left({\mathbb{R}}^{6},J, g_1\right)$ be a nearly K\"ahler manifold endowed with the Euclidean metric $g_1$ on ${\mathbb{R}}^{6}$ given by
$$
g_1=\sum_{i=1}^{6} d x_i^2
$$
\noindent
and canonical complex structure
$$
J\left(x_1, x_2, x_3, x_4, x_5, x_6\right)=\left( -x_2, x_1, -x_4, x_3, -x_6,x_5\right).
$$
The $J$-basis is $\left\{\left.e_i=\dfrac{\partial}{\partial x_i} \right\rvert\, i=1, \ldots, 6\right\}$. Let $\left({\mathbb{R}}^4, g_2\right)$ be a Riemannian manifold endowed with the metric $g_2=\displaystyle\sum_{i=1}^4 d y_i^2$.

Consider a map $F:\left({\mathbb{R}}^{6},J, g_1\right) \rightarrow\left({\mathbb{R}}^4, g_2\right)$ defined by

$$
F\left(x_1, x_2, x_3, x_4,x_5,x_6\right)=\left(\frac{x_3-x_4}{\sqrt{2}}, x_5, x_6, 0\right) .
$$
\noindent
Then, the Jacobian matrix of $F$ is 

\[
\begin{pmatrix}
0& 0 & \frac{1}{\sqrt{2}} &  -\frac{1}{\sqrt{2}}&0 & 0   \\
0 & 0 & 0 & 0 & 1 & 0 \\
0 & 0 & 0 & 0 & 0 &1\\
0 & 0 & 0 & 0 & 0 & 0 & \\
\end{pmatrix}.
\]
Then, by direct calculations, we have
$$
\begin{aligned}
 {\operatorname{ker} } F_*=&\operatorname{span}\left\{V_1=\frac{\partial}{\partial x_1}, V_2=\frac{\partial}{\partial x_2},V_3=\frac{\partial}{\partial x_3}+\frac{\partial}{\partial x_4}\right\}, \\
 \left(\operatorname{ker} F_*\right)^{\perp}=&\operatorname{span}\left\{X_1=\frac{\partial}{\partial x_3}-\frac{\partial}{\partial x_4}, X_2=\frac{\partial}{\partial x_5},X_{3}=\frac{\partial}{\partial x_6}\right\},\\
 {\operatorname{range} } F_*=&\operatorname{span}\left\{ \frac{\partial}{\partial y_1} , \frac{\partial}{\partial y_2} , \frac{\partial}{\partial y_3}  \right\},\\
 \left(\operatorname{range} F_*\right)^{\perp}=&\operatorname{span}\left\{ \frac{\partial}{\partial y_4}  \right\},
\end{aligned}
$$
\noindent
where $ \displaystyle\left\{\frac{\partial}{\partial x_1},\frac{\partial}{\partial x_2},\frac{\partial}{\partial x_3},\frac{\partial}{\partial x_4},\frac{\partial}{\partial x_5},\frac{\partial}{\partial x_6} \right\}$,$\displaystyle \left\{ \frac{\partial}{\partial y_1},\frac{\partial}{\partial y_2},\frac{\partial}{\partial y_3},\frac{\partial}{\partial y_4} \right\}$ are bases on $T_{p} {\mathbb{R}}^{6}$  and $T_{F(p)} {\mathbb{R}}^{4}$, respectively, for all $p \in {\mathbb{R}}^{6}$. By direct computations, we can see that $F_*(X_1)=\sqrt{2}\dfrac{\partial}{\partial y_1}$, $F_*(X_2)=\dfrac{\partial}{\partial y_2}$, $F_*(X_3)=\dfrac{\partial}{\partial y_3}$. We know that $F$ is a Riemannian map if and only if $g_{2}(F_{*}X_i,F_{*}X_j)= g_{1}(X_i,X_j) $ for $i,j=1,2,3$ and for all $X_i, X_j \in \left(\operatorname{ker} F_*\right)^{\perp} $. Thus, $F$ is a Riemannian map. Moreover, $J V_1=-V_2, J V_2=V_1, J V_3=X_1,JX_{1}=-V_3,J X_2 =-X_3,J X_3=X_2$; therefore, ${\mathcal{D}}_1=\operatorname{span}\left\{V_1, V_2\right\}$ and ${\mathcal{D}}_2=$ span $\left\{V_3\right\}$. Also,we have  $J{\mathcal{D}}_1={\mathcal{D}}_1$ and ${\mathcal{D}}_2 \cap J{\mathcal{D}}_2=\{0\}$. \\ Thus, $F$ is a generic Riemannian map.\\

\noindent
From the Koszul formula for the Levi-Civita connection $\nabla$ for ${\mathbb{R}}^6$ and simple calculations, we obtain
$$
\nabla_{e_i} e_j=0 \text { for all } i, j=1, \ldots, 6.
$$
\noindent
Hence $T_U V=T_V U=T_U U=0$ for all $U, V \in \Gamma\left(\operatorname{ker} F_*\right)$. Therefore, the fibers of $F$ are totally geodesic. Thus, $F$ is a Clairaut map.
 \end{ex-new}

\noindent{\bf Declaration:} We declare that no conflicts of interest are associated with this publication, and there has been no significant financial support for this work.  We certify that the submission is original work. The third author is supported and funded by the National Board of Higher Mathematics (NBHM) project no. 02011/21/2023NBHM(R.P.)/R\&DII/14960, INDIA.\\

\noindent{\bf Acknowledgment:}
The authors are deeply grateful to the reviewers for their insightful and constructive comments, which have significantly improved the quality of this work.

\end{document}